\newtheorem{theorem}{Theorem}
\newtheorem{proposition}[theorem]{Proposition}
\newtheorem{remark}[theorem]{Remark}
\newtheorem{assumption}[theorem]{Assumption}
\begin{document}
%
\title{Robust Topology Identification and Control of LTI Networks}

\author{\IEEEauthorblockN{Mahyar Fazlyab and Victor M. Preciado}
\IEEEauthorblockA{Department of Electrical and Systems Engineering\\
University of Pennsylvania\\
Philadelphia, PA 19104-6228 USA\\
\texttt{\small \{mahyarfa,preciado\}@seas.upenn.edu}\\}
}


%


\maketitle

\begin{abstract}
This paper reports a robust scheme for topology identification and control of networks running on linear dynamics. In the proposed method, the unknown network is enforced to asymptotically follow a reference dynamics using the combination of Lyapunov based adaptive feedback input and sliding mode control. The adaptive part controls the dynamics by learning the network structure, while the sliding mode part rejects the input uncertainty. Simulation studies are presented in several scenarios (detection of link failure, tracking time varying topology, achieving dynamic synchronization) to give support to theoretical findings.
\end{abstract}

\begin{IEEEkeywords} LTI Networks, Topology Identification, Model Reference Adaptive Control, Sliding Mode Control \end{IEEEkeywords}

%
\IEEEpeerreviewmaketitle

\section{Introduction}
Complex networks are capable of modeling many real world dynamical processes which can be described by a set of interacting elements (nodes) having some sort of connections or causal relationships (edges) among them.
For example, in molecular biology, complex networks are used to describe regulatory relationships between transcription factors and their target genes \cite{lezon2006using}.  In a communication network, amount of traffic flowing between nodes constitute a dynamical network. \cite{kelly1998rate}

In recent years, there has been an extensive attention to topology identification and control of complex dynamical networks. Topology identification aims at finding the strength of connections between nodes, such as protein-DNA interactions in the regulation of various cellular processes. or detecting failures or anomalies in connections. Also, the ability to control the dynamic variables of nodes by external inputs is another interesting issue which has appreciable applications in, for example, synchronization of coupled oscillators \cite{Yu2009429}, or rate control in communication networks.

Topology identification of networks has been addressed in various research works. In \cite{yu2006estimating} for example, the topology of the network is estimated on-line using Lyapunov based adaptive feedback input. In \cite{5718112}, a node-knockout
procedure is proposed for the complete characterization of the interaction geometry in consensus-type networks. In a different approach addressed in \cite{shahrampour2013reconstruction} and \cite{shahrampour2013topology}, nodes are stimulated by wide-sense stationary input of unknown power spectral density and the topology of the network is identified via measuring cross power spectral densities of the outputs, which encode the direction and weights of the edges.  The control of complex networks has also been studied extensively. In \cite{li2006controlling}, for example, a linear state feedback controller is designed to synchronize the states with a desired orbit.

The purpose of this paper is two fold. We address both the control and topology identification of complex networks running on linear dynamics and in presence of input uncertainty. We assume that the topology of the underlying graph is unknown or uncertain. We design the input of the  network so as to enforce the dynamic variables of the nodes to track those of a predefined reference network, without having the knowledge of the network topology. In this setting, the unknown topology of the network is also estimated using Lypunov theory and adaptive feedback gains. Next, we develop the robust counterpart of the control-identification scheme using sliding mode technique. With this method the additive bounded disturbance to the input of the network is rejected, translating into smooth tracking and estimation. The rest of the paper is organized as follows: In section II the problem is defined and the tracking controller and the identifier are designed. In section III we robustify the algorithm through incorporating additional sliding mode control law. In section IV we provide simulations to verify the effectiveness of the proposed approach. Concluding remarks are drawn is section V.

\section{Simultaneous Identification and Control}
Consider the following linear time invariant dynamics on a directed network:

\begin{equation}
\dot{x}(t)=\mbox{A}x(t)+\mbox{B}u(t)  \label{1}
\end{equation}

where $\mbox{A}=[a_{ij}] \in \mathbb{R}^{N\times N}$ is the unknown weighted adjacency matrix of the directed network with $N$ nodes. $a_{ij}$ denotes the strength of connection from node $j$ to node $i$, and is zero when there is no connection. Positive or negative weights refer to excitatory or inhibitory effect. In consensus dynamics, the adjacency matrix $\mbox{A}$ is replaced by the negative Laplacian of the underlying network. Vector $x(t)\in \mathbb{R}^N$ captures the evolution of the nodes which can be, for instance, the transcription factor in gene regulatory networks or amount of traffic entering each node in a communication network. Here, without loss of generality, we are assuming scalar variables for each node. Extention to multi variable nodes, where each node is represented by an LTI system, is straightforward. Boolean matrix $\mbox{B}=[b_{ij}] \in \mathbb{R}^{n\times m}$, $b_{ij}\in \{0,1\}$ determines which nodes are accessible by external inputs, where $b_{ij}=1$  when input $j$ drives node $i$ and $b_{ij}=0$ otherwise. External input $u(t)\in \mathbb{R}^m$ is responsible for driving the states of the network. The goal is to control the dynamic variables of the nodes and to estimate the weights of connection between nodes. For this goal to be possible, we need structural controllability assumption of the pair $(\mbox{A},\mbox{B})$. In the context of networks, controllability translates into minimum number of driver nodes that can uniquely determine the time evolution of all nodes \cite{liu2011controllability}. 

\begin{remark}
If the topology of the network is completely unknown, all nodes must be accessible for input injection, i.e. $B=I_{N}$. However, if the boolean structure is known a priori and the weights of connections are unknown, fewer nodes are required for input injection. In the development of the identification algorithm, we consider the latter case.
\end{remark} 

To begin, we define the following reference network:

\begin{equation}
\dot{x}_m(t)=\mbox{A}_m x_m(t)+Br(t) \label{2}
\end{equation}

where $A_m$ is the reference adjacency matrix, and $r(t)$ is the reference control input. If the pair $(A_m,B)$ is controllable, one can generate the desired trajectory $x_m(t)$ using appropriate input $r(t)$.

\begin{assumption}
There exists matrices $K^* \in \mathbb{R}^{m\times n}$ such that the algebraic equations $A+BK^*=A_m$ is satisfied. For the special case of $B=I_{N}$, this assumption is held.
\end{assumption}

Consider the following input for the real network:
\begin{equation}
u(t)=K^*x(t)+L^*e(t)+r(t) \label{3}
\end{equation} 

Substituting Eq. \eqref{3} in Eq. \eqref{1} results in the following dynamics for the real network:

\begin{equation}
\dot{x}(t)=(A+BK^*)x(t)+Br(t)+BL^*e(t) \label{b}
\end{equation}

Subtracting \eqref{2} from \eqref{b} yields the error dynamics:

\begin{equation}
\dot{e}(t)=(A+BK^*)x(t)-A_mx_m(t)+BL^*e(t) \label{4}
\end{equation}

Now if $K^*$ and $L^*$ are deigned such that $A+BK^*=A_m$ and $A_m+BL^*$ is Hurwitz, the error dynamics reduces to $\dot{e}(t)=(A_m+BL^*)e(t)$, translating to asymptotic convergence of $e(t)$ to zero.

It is clear that when matrix $A$ is unknown or uncertain, the feedback matrix $K^*$ cannot be designed. By estimating $K^*$ and using the identity $A+BK^*$, one can recover the unknown topology. Denoting this estimate at time $t$ by $K(t)$, the closed loop network admits the following dynamics:
\begin{equation}
\dot{x}(t)=(A+BK(t))x(t)+Br(t)+BL^*e(t) \label{5}
\end{equation}

Similarly, by subtracting \eqref{2} from \eqref{5}, the error dynamics can be written as:
\begin{equation}
\dot{e}(t)=(A_m+BL^*)e(t)+B\tilde{K}(t)x(t) \label{6}
\end{equation}

where $\tilde{K}(t):= K(t)-K^*$ is the estimation residual.\\

\begin{proposition} 
If $K(t)$ satisfies the following differential equation, the tracking error will asymptotically converge to zero:
\begin{equation}
\dot{K}(t)=\dot{\tilde{K}}(t)=-W^{-1}B^TPe(t)x(t)^T, \label{7}
\end{equation}
where $P\in \mathbb{S}_{++}^{n\times n}$ and $W \in \mathbb{S}_{++}^{n\times n}$ are positive definite symmetric matrices.
\end{proposition}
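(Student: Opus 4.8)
The plan is to establish asymptotic tracking via a Lyapunov argument, exploiting the fact that the adaptation law \eqref{7} is engineered precisely to cancel the sign-indefinite cross term that the parameter error injects into the error dynamics \eqref{6}. First I would propose the composite Lyapunov candidate
\begin{equation}
V(e,\tilde{K}) = e^T P e + \mathrm{tr}\big(\tilde{K}^T W \tilde{K}\big), \nonumber
\end{equation}
which is positive definite and radially unbounded in the joint variable $(e,\tilde{K})$ since $P,W \succ 0$. Here $P$ should be read not as an arbitrary positive definite matrix but as the (unique) solution of the Lyapunov equation $(A_m+BL^*)^T P + P(A_m+BL^*) = -Q$ for some $Q \succ 0$; such a $P$ exists precisely because $A_m+BL^*$ was assumed Hurwitz.

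Next I would differentiate $V$ along the trajectories of \eqref{6}. The quadratic term produces $\tfrac{d}{dt}(e^T P e) = -e^T Q e + 2 e^T P B \tilde{K} x$, where the first piece comes from the Lyapunov equation and the second from the coupling $B\tilde{K}x$. For the trace term, using $\dot{\tilde{K}} = \dot{K}$, the symmetry of $W$, and the cyclic property of the trace, the adaptation law \eqref{7} gives $\tfrac{d}{dt}\,\mathrm{tr}(\tilde{K}^T W \tilde{K}) = -2\,\mathrm{tr}(\tilde{K}^T B^T P e x^T) = -2 e^T P B \tilde{K} x$, where I recognise the $1\times 1$ quantity as a scalar equal to its transpose. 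Adding the two contributions, the cross terms cancel identically and I obtain $\dot{V} = -e^T Q e \le 0$. This cancellation is the crux of the argument: the specific form of \eqref{7}, including the factor $W^{-1}B^T P$, is exactly what annihilates the indefinite term.

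From $\dot{V}\le 0$ I would conclude that $V$ is nonincreasing and bounded, whence $e(t)$ and $\tilde{K}(t)$ are bounded for all $t$. The main obstacle is that $\dot{V}$ is only negative \emph{semi}definite — it penalizes $e$ but not $\tilde{K}$ — so neither Lyapunov's direct theorem nor LaSalle's invariance principle (the latter unavailable for this nonautonomous, $x(t)$-dependent system) yields $e\to 0$ on its own. I would close this gap with Barbalat's lemma: integrating gives $\int_0^\infty e^T Q e\,dt \le V(0) < \infty$. To invoke the lemma I must verify that $e^T Q e$ is uniformly continuous, which follows once $\dot{e}$ is bounded. Boundedness of $\dot{e}$ in turn needs boundedness of $x=e+x_m$; since $x_m$ is the state of the stable reference model \eqref{2} driven by a bounded $r(t)$, it is bounded by design, and with $e,\tilde{K}$ already bounded the right-hand side of \eqref{6} is bounded. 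Hence $\tfrac{d}{dt}(e^T Q e)$ is bounded, $e^T Q e$ is uniformly continuous, and Barbalat's lemma forces $e^T Q e \to 0$, i.e. $e(t)\to 0$ as $t\to\infty$. I would note finally that parameter convergence $\tilde{K}\to 0$ (hence exact topology recovery) is \emph{not} implied by this argument and would additionally require a persistency-of-excitation condition on $x(t)$.
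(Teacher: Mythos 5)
Your Lyapunov construction and the cancellation of the cross term are exactly the paper's: the same composite candidate $V=e^TPe+\mathrm{tr}(\tilde K^TW\tilde K)$ (up to the harmless factor $\tfrac12$), the same use of the Lyapunov equation for the Hurwitz matrix $A_m+BL^*$, the same trace identity to show that the adaptation law \eqref{7} annihilates the indefinite term, arriving at $\dot V=-e^TQe\le 0$. Where you genuinely diverge is in the final convergence step. The paper invokes LaSalle's invariance principle on the set $\{\dot V=0\}=\{e=0\}$, whereas you explicitly reject LaSalle on the grounds that the error dynamics \eqref{6} are nonautonomous (they depend on $x(t)=e(t)+x_m(t)$, which is driven by the time-varying reference input), and instead close the argument with Barbalat's lemma: integrability of $e^TQe$ from $\dot V\le 0$, boundedness of $e$, $\tilde K$, $x_m$, hence of $\dot e$, hence uniform continuity of $e^TQe$, hence $e\to 0$. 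Your route is the more rigorous one — the classical invariance principle does not directly apply to time-varying systems, and the Barbalat argument (which the paper itself deploys later in the sliding-mode proposition) is the standard MRAC remedy; it also forces you to make explicit the boundedness assumptions on $r(t)$ and $x_m(t)$ that the paper leaves tacit. Your closing caveat that $\tilde K\to 0$ requires persistent excitation matches the paper's discussion following the proposition. No gap; if anything, your version patches a technical weakness in the paper's own concluding step.
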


\begin{proof}
We define the following Lyapunov function:
\begin{equation}
V=\frac{1}{2}e^TPe+\frac{1}{2}tr(\tilde{K}^TW\tilde{K}), \label{8}
\end{equation}
The first term is the weighted two norm of the tracking error, while the second terms is the weighted Forbenius norm of $\tilde{K}$ which vanishes only when $\tilde{K}$ is identically zero, resulting in a well defined Lyapunov function. Taking time derivative of Eq. \eqref{8} we see:
\begin{align}
\dot{V} =&-\frac{1}{2}e^TQe+x^T\tilde{K}^TB^TPe
+tr(\tilde{K}^TW\dot{\tilde{K}})\nonumber \\
\nonumber\\
=&-\frac{1}{2}e^TQe+tr(\tilde{K}^T(B^TPex^T+W\dot{\tilde{K}})) \label{9}
\end{align}
Here in the first equality, we have used the fact that $(A_m+BL^*)P+P(A_m+BL^*)^T=-Q$ for some positive definite matrix $Q$ for Hurwitz $A_m+BL^*$. In the second equality, we have used the identity $x^T\tilde{K}^TB^TPe=tr(\tilde{K}^TB^TPex^T)$. Substituting Eq. \eqref{7} in \eqref{9} yields:
\begin{equation}
\dot{V} =-\frac{1}{2}e^TQe \label{10}
\end{equation}
Let M be the set of all points for which $\dot{V}=0$, i.e. $M=\{(e,\tilde{K})|e=0\}$. According to Lasalle's invariance principle, all dynamic variables will asymptotically converge to the largest invariant M, where $e=0$, completing the proof. 
\end{proof}
The largest invariant set can be stated as:
\begin{equation}
M_{L}=\{( e,\tilde{K})|e=0 , \  \dot{\tilde{K}}=0 , \ B\tilde{K}x_m(t)=0 \}
\end{equation}  \label{M}
According to Eq. \ref{7}, the learning process is stopped. In order to have zero steady state estimation residual ($\tilde{K}$ in \eqref{M}), matrix $B$ must be full rank, and the reference signal $x_m(t)$ must span the N-dimensional space in time, i.e. it must satisfy the persistent excitation condition. Mathematically speaking, the condition of persistent excitation translates into the condition $\int\limits_{t}^{t+T} x_m(\tau)x_m^T(\tau)d\tau \geq \alpha I_N$ for some $\alpha , T>0$ and all $t \geq 0$ \cite{ioannou2012robust}. It can be readily verified that if $x_m(t)=[sin(\omega_0 t) \ sin(2\omega_0 t) ... sin(N\omega_0 t)]^T$ \textit{PE} condition is satisfied with $T=\frac{2\pi}{\omega_0}$ and $\alpha=\frac{\pi}{\omega_0}$.\\

\begin{remark}
In dynamic synchronization, i.e. when $x_1(t)=x_2(t)=...=x_N(t)$, the condition of persistent excitation is failed and the network topology becomes unidentifiable \cite{4806038}.\\
\end{remark}

\section{Robust Topology identification}
In practical settings, there usually exists uncertainty or disturbance to the input of the real network. In this case, the perturbation adversely affects tracking and learning. In order to reject this disturbance, we might add an additional control input to robustify the adaptation process. To see this, consider the following dynamics for the real network:
\begin{equation}
\dot{x}(t)=Ax(t)+B(u(t)+d(t))  \label{11}
\end{equation}
where $d(t)$ is the unknown additive disturbance with known bound. We resort to sliding mode technique which is a robust control scheme in order to reject this disturbance \cite{khalil2002nonlinear}. To begin, we first define the following sliding surface:
\begin{equation}
s(t)\triangleq\Gamma\left(e(t)-\int\limits_{0}^{t}(A_m+BL^*)e(\tau)d\tau\right) \label{12}
\end{equation}
Where $\Gamma\in\mathbb{R}^{m\times N}$ is a full rank matrix to be designed. By this definition, it is clear that once $s(t)=0,  \forall t\geq t_0$ for some $t_0\geq 0$ (or i.e. $s(t) \equiv \dot{s(t)}\equiv 0$), the tracking error would follow a stable manifold toward the origin. Hence, tracking becomes equivalent to regulation of $s(t)$ to zero. Consider the following input:
\begin{equation}
u(t)=K(t)x(t)+L^*e(t)+r(t)-\rho M\frac{s(t)}{\|s(t)\|} \label{13}
\end{equation}
Here $\|.\|$ refers to two norm. Substituting Eq. \eqref{13} in Eq. \eqref{11} results in the following dynamics for the real network:
\begin{align}
\dot{x}(t)=&\left(A+BK(t)\right)x(t)+B (r(t)+L^*e(t) \nonumber \\
&-\rho M\frac{s(t)}{\|s(t)\|}+ d(t)) \label{14}
\end{align}
Subtracting Eq.\eqref{2} from Eq. \eqref{14} results in the following error dynamics:
\begin{equation}
\dot{e}(t)=(A_m+BL^*)e(t)+B\left(\tilde{K}(t)x(t)-\rho M\frac{s(t)}{\|s(t)\|}+ d(t)\right) \label{15}
\end{equation}
\begin{proposition}
Consider the control input \eqref{13}. If $K(t)$ satisfies the differential equation \eqref{16}, and $\rho=\|P\Gamma B\|\|d(t)\|+\epsilon$ with $\epsilon >0$, the tracking error, as described by Eq. \ref{15} will asymptotically converge to zero.

\begin{equation}
\dot{K}(t)=\dot{\tilde{K}}(t)=-W^{-1}B^T\Gamma^TPs(t)x(t)^T \label{16} 
\end{equation}
\end{proposition}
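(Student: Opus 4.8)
The plan is to mirror the Lyapunov argument of the previous proposition, now organized around the sliding variable $s(t)$ rather than the tracking error $e(t)$ directly, and then to close the argument with the sliding-mode reaching condition so that the bounded disturbance $d(t)$ is dominated. First I would compute $\dot{s}(t)$ by differentiating the definition \eqref{12}: since $s(t)=\Gamma(e(t)-\int_0^t(A_m+BL^*)e\,d\tau)$, the $(A_m+BL^*)e$ contribution of the error dynamics \eqref{15} cancels exactly, leaving the reduced expression $\dot{s}=\Gamma B(\tilde{K}x-\rho M\frac{s}{\|s\|}+d)$. This isolates the adaptive cross term, the switching term, and the disturbance on the sliding manifold.

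Next I would introduce the composite Lyapunov function $V=\frac12 s^TPs+\frac12\,\mathrm{tr}(\tilde{K}^TW\tilde{K})$, where $P$ is now the positive definite weight compatible with $s\in\mathbb{R}^m$, differentiate along trajectories, and substitute $\dot s$ together with the adaptation law \eqref{16}. As before, the cross term $s^TP\Gamma B\tilde{K}x$ should be annihilated by $\mathrm{tr}(\tilde{K}^TW\dot{\tilde{K}})=-s^TP\Gamma B\tilde{K}x$ once the trace identity is applied; this is precisely why \eqref{16} carries the factor $B^T\Gamma^TP$. What survives is $\dot V=-\rho\,s^TP\Gamma B M\frac{s}{\|s\|}+s^TP\Gamma B d$. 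With $M$ designed so that $P\Gamma B M=I$ (which requires $\Gamma B$ full rank), the first term collapses to $-\rho\|s\|$, and the disturbance term is controlled by Cauchy–Schwarz and submultiplicativity as $s^TP\Gamma B d\le\|P\Gamma B\|\,\|d\|\,\|s\|$.

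Substituting $\rho=\|P\Gamma B\|\,\|d(t)\|+\epsilon$ then produces the reaching inequality $\dot V\le-\epsilon\|s\|\le0$, strictly negative off the surface $s=0$. From here I would argue in two phases. In the reaching phase, $\dot V\le-\epsilon\|s\|$ forces $s$ toward the sliding surface: integrating the inequality yields $s\in L_1$ and boundedness of all signals, and a Barbalat/LaSalle argument gives $s\to0$. In the sliding phase, $s\equiv0$ forces, through \eqref{12}, the reduced error dynamics $\dot e=(A_m+BL^*)e$, whose Hurwitz system matrix guarantees $e\to0$ asymptotically.

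I expect the main obstacle to be the rigorous treatment of the discontinuous switching term $\frac{s}{\|s\|}$ at $s=0$ (requiring Filippov solutions and care that $\dot V\le-\epsilon\|s\|$ genuinely certifies convergence of $s$ despite the adaptive energy stored in $\tilde K$), together with justifying the passage from $s\equiv0$ to the clean stable manifold $\dot e=(A_m+BL^*)e$: this last step is transparent when $\Gamma$ is invertible, e.g. the full-access case $B=I_N$, $m=N$, but otherwise demands the reduced-order sliding analysis on $\ker\Gamma$. The algebraic cancellation of the adaptive term and the disturbance bound, by contrast, are routine once the correct Lyapunov weighting $P$, the design of $M$, and the gain condition on $\rho$ are in place.
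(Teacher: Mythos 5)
Your proposal follows essentially the same route as the paper: the composite Lyapunov function $V=\tfrac12 s^TPs+\tfrac12\mathrm{tr}(\tilde K^TW\tilde K)$, cancellation of the adaptive cross term via \eqref{16}, the choice $M=(P\Gamma B)^{-1}$, the gain condition on $\rho$ yielding $\dot V\le-\epsilon\|s\|$, and a Barbalat-type argument giving $s\to0$ and hence $e\to0$ through the sliding surface. The caveats you raise (Filippov solutions for the discontinuous term, and the fact that $s\to 0$ only cleanly implies $e\to0$ when $\Gamma$ is square and invertible) are real but are equally present, and left unaddressed, in the paper's own proof.
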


\begin{proof}
Define the following Lypunov function candidate:
\begin{equation}
V=\frac{1}{2}s^TPs+\frac{1}{2}tr(\tilde{K}^TW\tilde{K}) \label{17}
\end{equation}
Differentiating Eq. \eqref{17} w.r.t. time yields:
\begin{align}
\dot{V}=& s^TP\Gamma\left[\dot{e}-(A_m+BL^*)e\right] \nonumber \\
=& s^TP\Gamma B(d-\rho M \frac{s}{\|s\|})
+s^TP\Gamma B\tilde{K}x + tr(\tilde{K}^TW\dot{\tilde{K}}) \nonumber \\
=& s^TP\Gamma B(d-\rho M \frac{s}{\|s\|})
+ tr(\tilde{K}^T(B^T\Gamma^TPsx^T+W\dot{\tilde{K}})) \nonumber \\
\label{18}
\end{align}
The differential equation \eqref{16} crosses out the second term of $\dot{V} $. Now by choosing $M=(P\Gamma B)^{-1}$ we get:
\begin{align}
\dot{V}=& s^TP\Gamma Bd -\rho\|s\| \nonumber \\
\leq& \|s\|(\|P\Gamma B\|\|d\|-\rho) \nonumber \\
\leq& -\epsilon \|s\| \label{19}
\end{align}
where the last inequality follows from the assumption $\rho=\|P\Gamma B\|\|d\|+\epsilon$. We maintain that inequality \eqref{19} ensures asymptotic convergence of the tracking error to zero as follows: Integrating the inequality \ref{19} implies $\epsilon \int\limits_{0}^{t}{\|s\|d\tau}\leq (V(0)-V(t))$. Since $\dot{V}$ is negative semi-definite and $V(t)$ is lower bounded, $\lim\limits_{t\to \infty}V(t)$ exists and hence by Schwartz inequality $\int\limits_{0}^{\infty}{\|s\|^2d\tau}\leq \left(\int\limits_{0}^{\infty}{\|s\|d\tau}\right)^2$, the function $g(t)\triangleq\frac{1}{2} \int\limits_{0}^{t}{\|s\|^2d\tau}$ has finite limit at $t=\infty$. Also $\ddot{g}(t)=s^T\dot{s}$ is bounded due to the boundedness of $s$ and $\dot{s}$. By Barbalat's lemma \cite{khalil2002nonlinear} one can conclude that $\lim\limits_{t\to\infty}s(t)=0$; and by the definition of $s(t)$, Eq. \eqref{12} convergence of $e(t)$ to zero is guaranteed.
\end{proof}

\section{NUMERICAL SIMULATIONS}
In this section, we present a simulation study to validate the effectiveness of the proposed approach. We consider a weighted undirected network whose weights are realizations of a uniform distribution in the unit interval, i.e. $a_{ij} \sim \mathcal{U}(0,1)$ for $i\neq j$ and $a_{ii}=0$.
Each node has its own input, i.e $B=I_{5\times 5}$. For the reference network, we assume $A_m=-I_{5\times5}$ and $B_m=I_{5\times 5}$. We also set the weights of the  Lyapunov function as $P=I_{5\times 5}$ and $W=10I_{5\times 5}$. The initial condition for the network is chosen as $x(0) \sim N(0,1)$. The initial condition for the reference network is zero, $x_m(0)=0$. The initial condition for the estimator $K(t)$ is also set to zero, $K(0)=0_{5\times 5}$. To fulfill the persistence excitation condition, the reference input is chosen as : $r_i(t)=r_{0i}sin(w_{i}t)$ where $r_{0i} \sim \mathcal{U}(1,2)$ and $w_{i} \sim \mathcal{U}(1,2)$
Figure.\ref{Fig1} illustrates the estimation of all elements of the adjacency matrix $A$.

\begin{figure}[h!]
  \centering
  \includegraphics[trim=1cm 0.5cm 1cm 1cm, clip=false, width=0.3\textwidth]{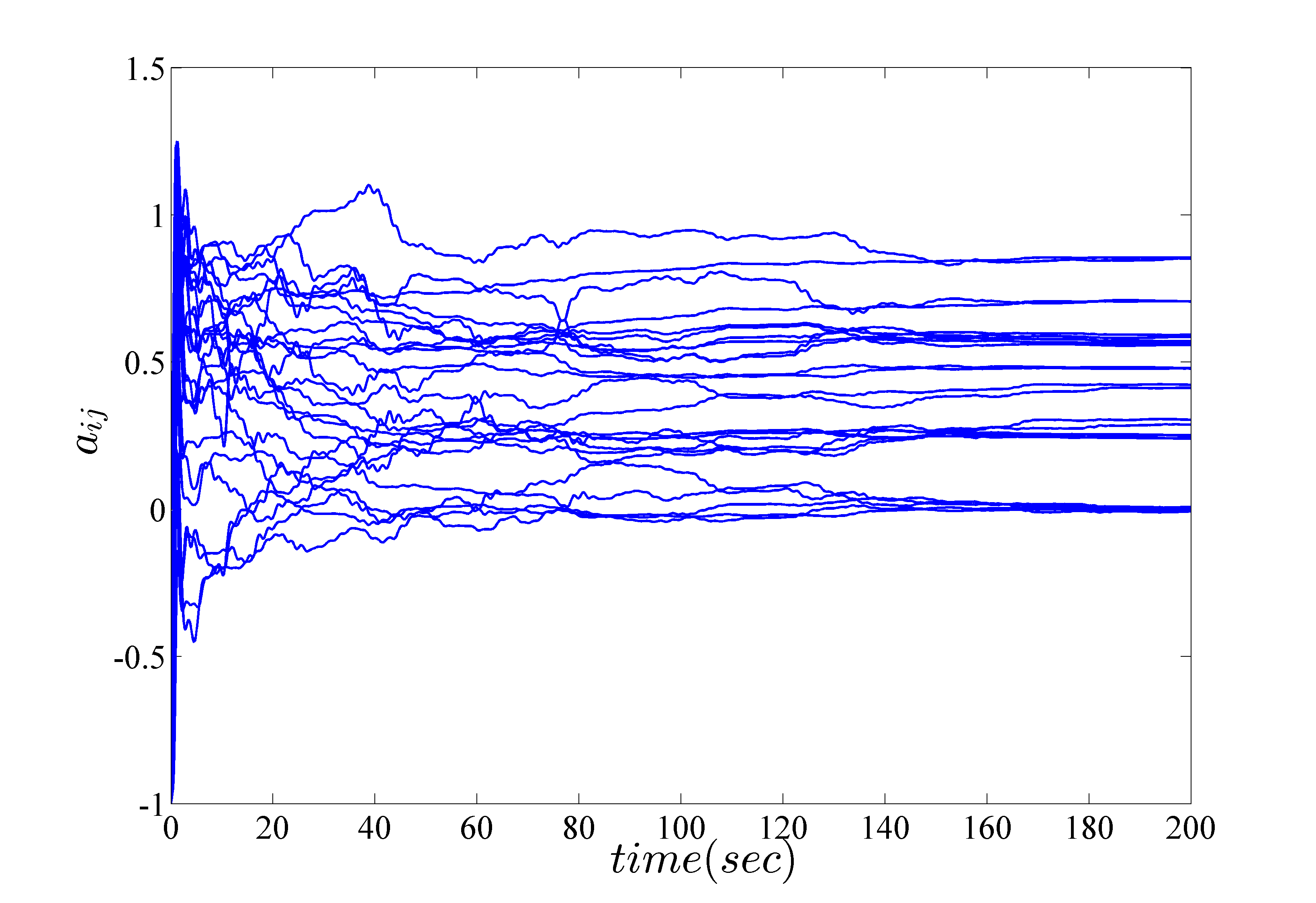}
  \caption{Estimation of weights}
  \label{Fig1}
\end{figure}

The proposed topology estimator can also be used for detecting link failures, i.e. sudden changes in the weights of the links as well as tracking slow temporal variations in the weights. Figure.\ref{Fig3} depicts the scenario when there is an abrupt failure in the link $a_{12}$. Figure.\ref{Fig4} shows the estimation behavior when the link $a_{12}$ has temporal variation $a_{12}(t)=0.56cos^2(2\pi t/800)$.  
\begin{figure}[h!]
  \centering
    \includegraphics[trim=1cm 1cm 1cm 1cm, clip=false, width=0.3\textwidth]{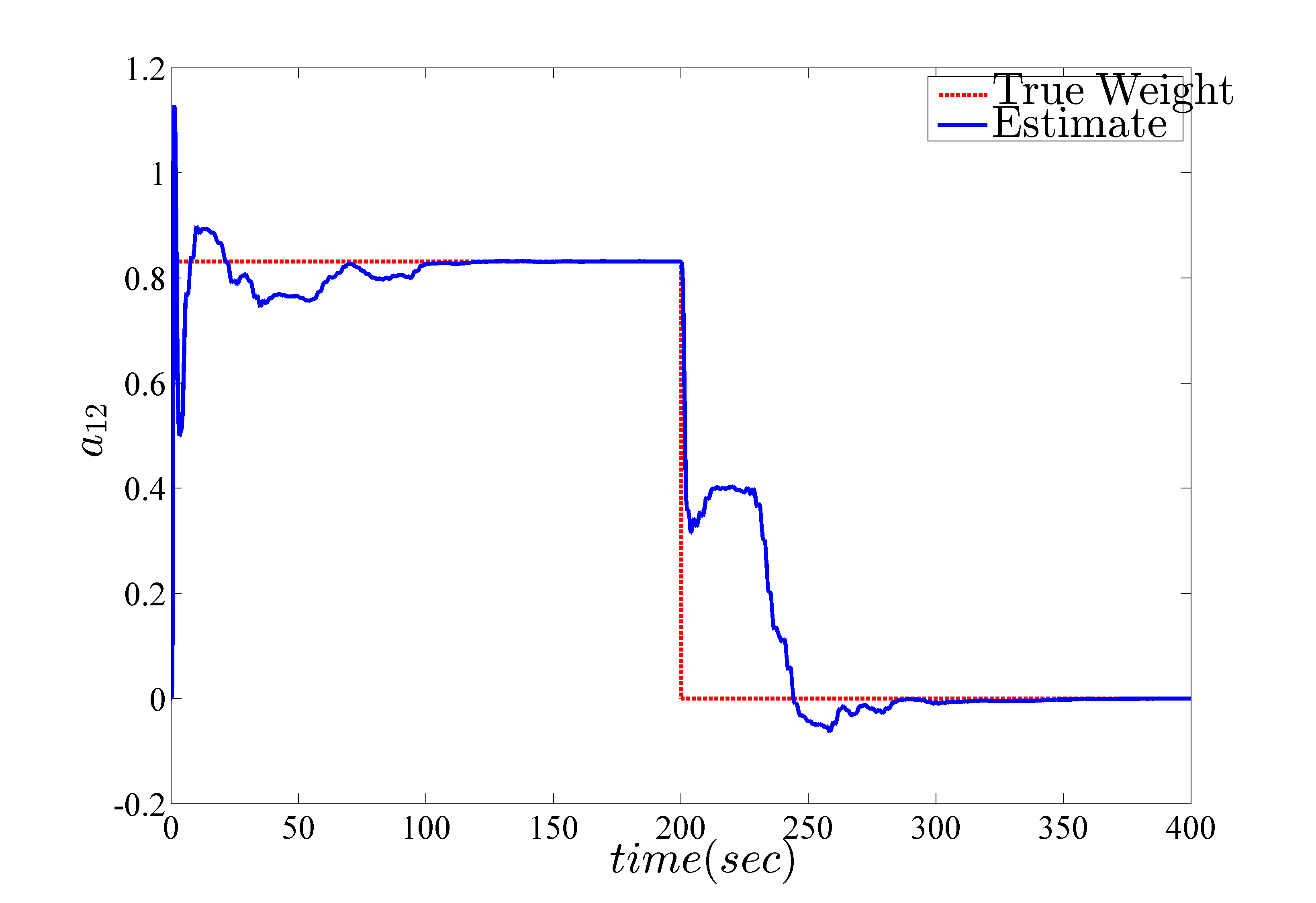}
  \caption{Detection of link failure between node 1 and node 2}
    \label{Fig3}
\end{figure}

\begin{figure}
  \centering
    \includegraphics[trim=1cm 1cm 1cm 1cm, clip=false,width=0.3\textwidth]{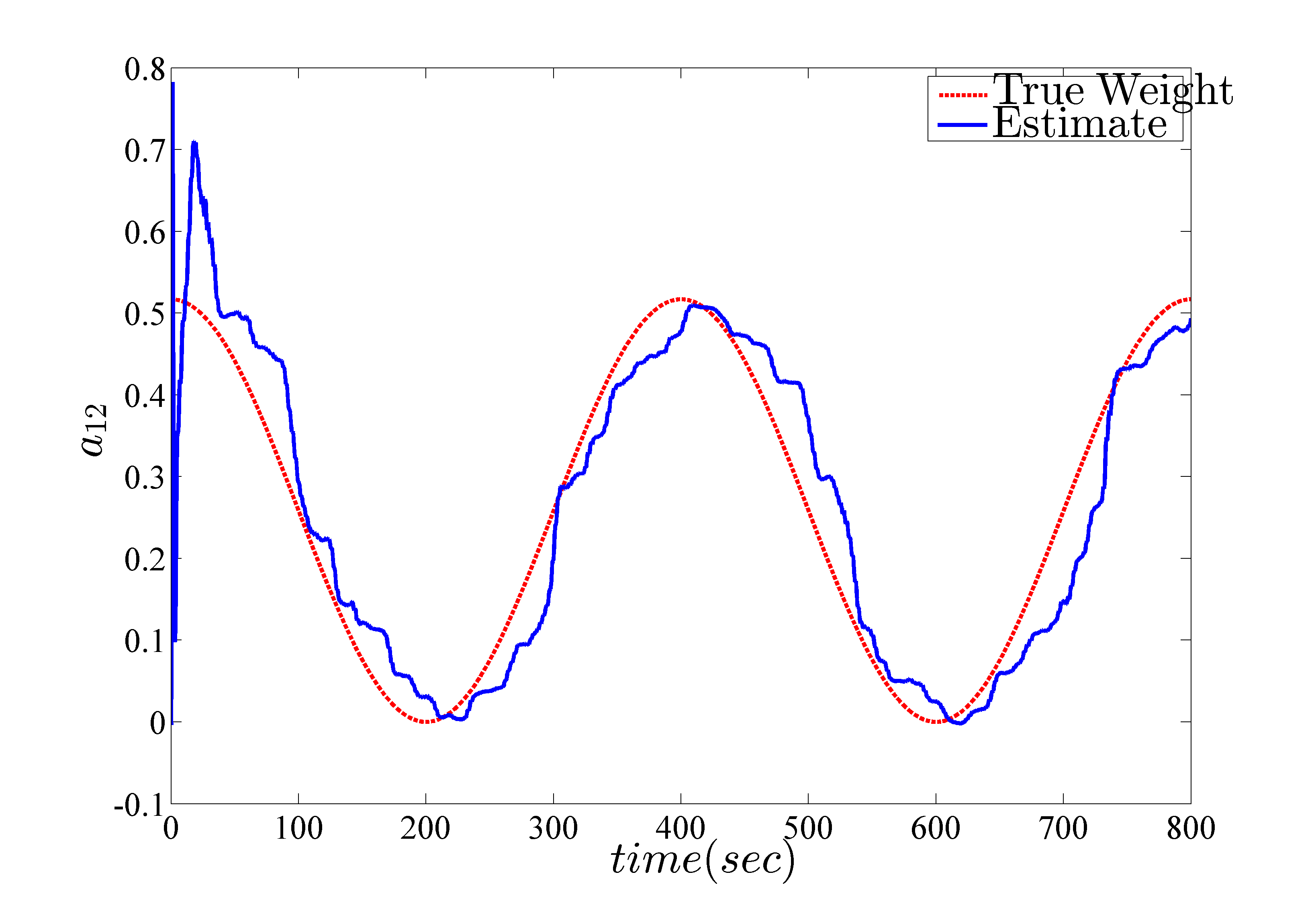}
  \caption{Tracking time varying connection strength between node 1 and node 2}
    \label{Fig4}
\end{figure}

In order to show the robustness of the identifier when sliding mode control is added, we consider a Gaussian disturbance added to all inputs, $d(t)\sim \mathcal{N}(0,1)$. Figure.\ref{Fig5} shows the sensitivity of learning in presence of noise. Figure.\ref{Fig6} verifies that the estimator perfectly smooths out the disturbance with the aid of sliding mode control. In figure.\ref{Fig7} we plot the state of node 1 along with the reference input and in presence of input disturbance. Note that for tracking purposes, the reference input may not be sufficiently rich, and hence the learning process will become useless.\\
\textit{Application in Dynamic Synchronization}: In order to show the effectiveness of the control algorithm on achieving dynamic synchronization, we choose the reference network as a dynamic consensus filter \cite{spanos2005dynamic} : $\dot{x}_m(t)=-Lx_m(t)+{r}(t)$ where $L$ is the Laplacian matrix of a connected undirected graph. Figure. \ref{Fig8} illustrates dynamic synchronization where $r(t)$ is the vector of unit step function and all nodes will asymptotically agree on unit ramp function.

\begin{figure}[h!]
  \centering
    \includegraphics[trim=1cm 1cm 1cm 1cm, clip=false,width=0.3\textwidth]{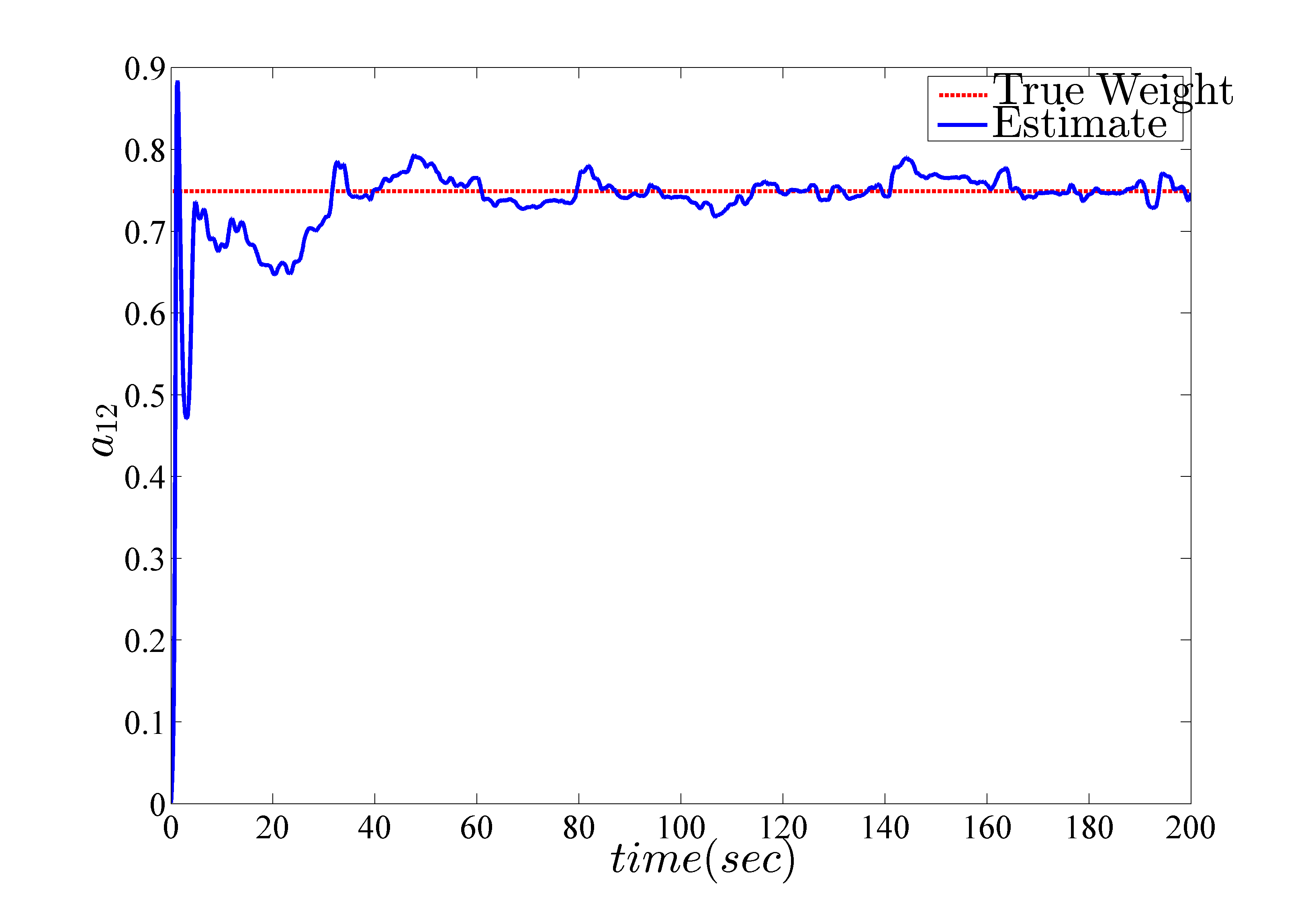}
  \caption{Estimation of link $a_{12}$ without disturbance rejection}
    \label{Fig5}
\end{figure}

\begin{figure}[h!]
  \centering
    \includegraphics[trim=1cm 1cm 1cm 1cm, clip=false,width=0.3\textwidth]{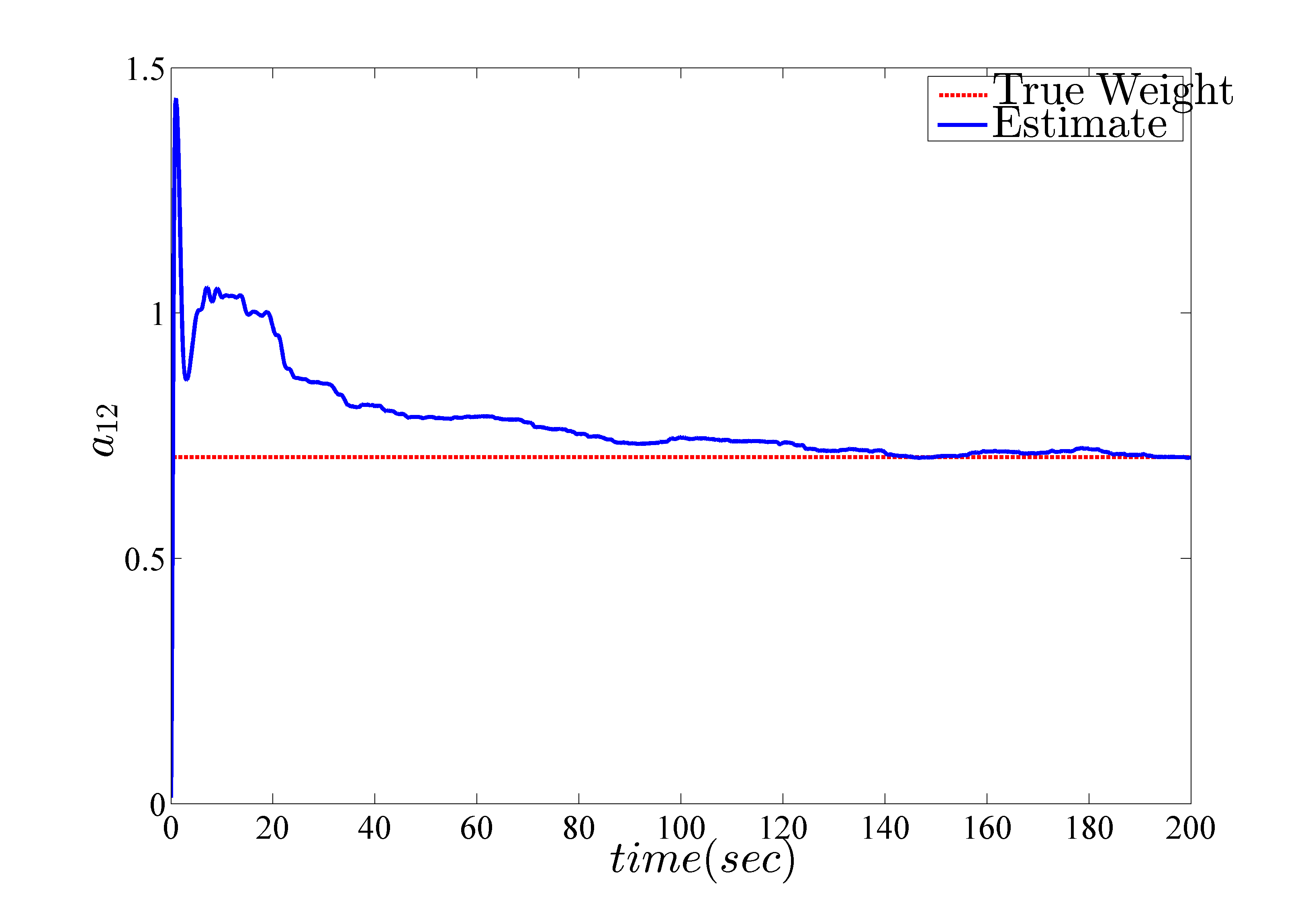}
  \caption{Estimation of link $a_{12}$ with disturbance rejection}
    \label{Fig6}
\end{figure}

\begin{figure}[h!]
  \centering
    \includegraphics[trim=1cm 1cm 1cm 1cm, clip=false,width=0.3\textwidth]{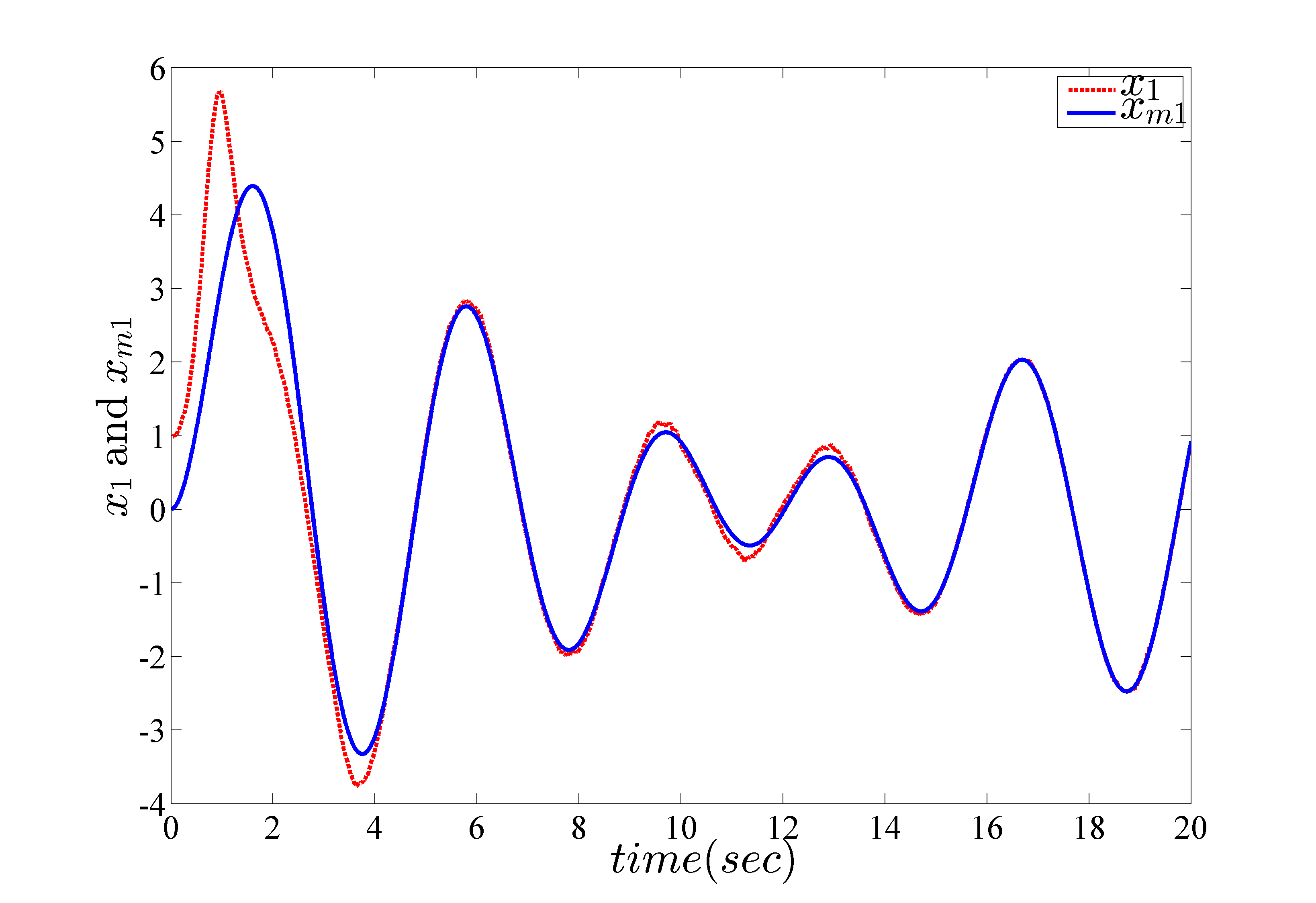}
  \caption{Tracking the reference signal in node 1 with disturbance rejection}
    \label{Fig7}
\end{figure}

\begin{figure}[h!]
  \centering
    \includegraphics[trim=1cm 1cm 1cm 1cm, clip=false,width=0.3\textwidth]{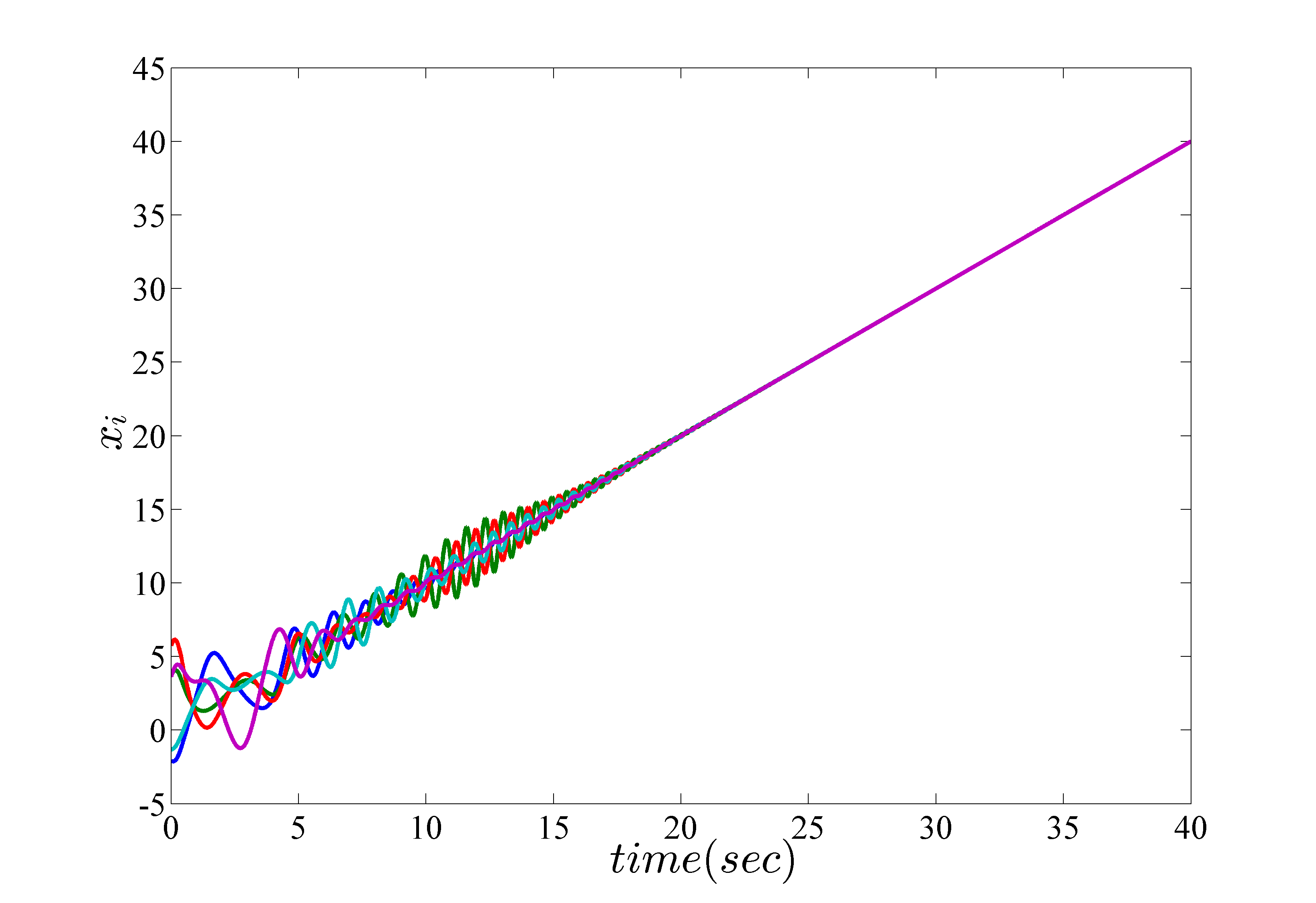}
  \caption{Dynamic Synchronization in the unknown network}
    \label{Fig8}
\end{figure}

\section{CONCLUSIONS}
This papers deals with topology identification and control of complex networks with linear dynamics. We used model reference adaptive scheme to design the proper input so as to drive the states of the nodes to desired trajectories. The proposed method is capable of controlling the nodes evolution without any information about the topology of the graph. The topology estimator is shown, via simulation, to be capable of tracking temporal variations in the weights of the connections. By combining the learning controller with sliding mode control, we could also robustify the algorithm to input uncertainty.

\addtolength{\textheight}{-12cm} 




\bibliographystyle{unsrt}
\bibliography{Refs}
\end{document}